\begin{document}


\title{Numerical Solution of the Tomography Problem in the Presence of Obstacles}

\author{\textbf{Kamen Lozev}}

\date{kamen@ucla.edu}

\maketitle


\begin{abstract}
\noindent \emph{We study numerical methods of tomography in domains with a reflecting obstacle. 
It will be shown that tomography with sets containing both broken rays, i.e. rays reflecting at 
the obstacle, as well as unbroken rays, has a smaller error between the original and 
reconstructed image compared to classical tomography methods.
}

\medskip

\noindent\textbf{Keywords:} tomography, reflection, obstacles,
Kaczmarz method, ART, ultrasound

\medskip


\end{abstract}


\section{Introduction}

In biology, geophysics, oceanography, communications and other areas of science and technology it is required to find the structure 
of a domain such as the human body, the Earth's interior, the oceans or the atmosphere from measurements taken at the domain's boundary. 
This problem can be approached with the methods of tomography. When the domain contains an obstacle new problems need to be addressed. 
In order to solve these problems, we study broken ray tomography. This work develops numerical methods using broken rays, or rays reflecting 
at the obstacle, for solving the tomography problem. It will be shown that broken ray tomography leads to tomographic 
imaging with reduced error and better numerical stability compared to classical tomography in the presence of reflecting obstacles.

\bigskip

Let $u(x)$ designate acoustic wave propagation from point A to point B. Here $u(x)$ denotes the pressure at point $x \in \mathbb{R}^2$.
This process is governed by the wave equation
$$ u_{tt} - c(x)^2 \Delta{u}=0$$
where $c(x)$ is the speed of sound at point $x \in \mathbb{R}^2$. The travel time of wave propagation is 
$$T(A,B)=\int_{\gamma_1}\frac{ds}{c(x(s))}$$
where the curve $\gamma_1$ is the acoustic geodesic from A to B. 

\bigskip

Assume that $c(x)$ is close to a constant: $c(x)=c_o+\epsilon(x)$ where $\epsilon(x)$ is small compared to $c_o$.
Then we will consider the approximation of $\gamma_1$ as the straight line segment $\tilde{\gamma_1}$. It is shown in \cite{RO}
that this linearization and more generally computing the travel time of u(x) over the known geodesic of the known
speed $c_o$ can be justified under certain conditions. The proof given in \cite{RO} is based on considering a perturbation parameter $\lambda$ and 
representing $c(x)=c_o+\lambda \epsilon(x)$.

\bigskip
 
Let $f(x)$ be a continuous function in $\Omega_0$, where $\Omega_0$ is a compact simply-connected set in $\mathbb{R}^2$ with a smooth boundary. Consider $\partial{\Omega_0}$ as the observation 
boundary of $\Omega_0$. Suppose we are given all integrals $\int_{\gamma} f(l) dl = C_{\gamma}$ where $\gamma$ are all straight line segments or unbroken 
rays in $\Omega_0$ that have both of their endpoints in $\partial{\Omega_0}$. The classical Tomography Problem is to find $f(x)$ in $\Omega_0$ knowing 
the values of all integrals $C_{\gamma}$. In the case of a domain $\Omega_0$ without an obstacle $\Omega_1 \subset \Omega_0$, this problem is widely studied 
theoretically and numerically \cite{Na1,KS,NW,F}. When there are obstacles present, this problem is much less studied. Some theoretical work is done 
in \cite{E1,E2,E3,E4} for domains with one obstacle and with both broken rays, i.e. rays reflecting at the obstacle, and unbroken rays. 
A key result from \cite{E1} is that the tomography problem in the presence of one reflecting obstacle is well-posed. 
In this work, we implement a numerical method for solution of the tomography problem in a domain with one convex obstacle with piece-wise smooth boundary 
using unbroken and broken rays. If an obstacle $\Omega_1 \subset \Omega_0$ is present then we have the problem of recovering $f(x)$  in $\Omega_0 \backslash \Omega_1$ with broken 
and unbroken rays $\gamma$. This problem is called the Broken Ray Tomography Problem\cite{E1}. 

\bigskip

In a basic tomography setup transmitters and receivers of wave signals are placed at the domain's boundary $\partial{\Omega_0}$. 
Signals are generated by the transmitters and received by the receivers. Travel times for signal propagation from transmitters to 
receivers are measured and, as discussed, these travel times $T(A, B)$ are the values of line integrals of a function $f(x)=\frac{1}{c(x)}$ 
where $c(x)>0$ is the speed of sound at point $x \in \Omega_0 \backslash \Omega_1$. This measurement procedure gives the $C_{\gamma}$ data 
for solving the Tomography Problem by relating signal travel times to the values of line integrals of $f(x)$. Given sufficient data, $f(x)$ 
and from here the velocity $c(x)$ are computed with tomographic reconstruction algorithms \cite{Na1,KS,NW,F}. 

\bigskip

In nature and in technology, signals are often acoustic or electromagnetic waves. The propagation speed of electromagnetic waves 
is very high and this leads to very small travel times. In order to work with relatively larger travel times, I will start the 
description of the physical problem by choosing high frequency acoustic waves or ultrasound as the signal carrier for obtaining 
tomography travel time data. The choice of ultrasound, or acoustic waves with frequency above 20 kHz is crucial. The analysis 
and methods developed for ultrasound will be extended to other carriers and especially laser beams. The physical model for this 
work is linear ultrasonic wave propagation with reflection; a method for non-linear reconstructive ultrasound tomography is developed 
in \cite{Sch}.
 
\bigskip

The acoustic geodesic for constant speed of sound is a straight line when there is no obstacle. Therefore, we consider two cases. 
In the first case, $\gamma = \tilde{\gamma_1}$ is an unbroken ray. 
In the case of a broken ray, $\gamma=\tilde{\gamma_1} \bigcup \tilde{\gamma_2}$ 
is the union of two straight line segments that intersect at a reflection point at the obstacle. 
Reflection is mirror-like i.e. the angle of incidence is equal to the angle of reflection.

\bigskip

For unbroken rays 
$$ T(A,B)=\int_{\gamma}\frac{ds}{c_o + \epsilon{(x(s))}} = \int_{\tilde{\gamma_1}}\frac{ds}{c_o + \epsilon{(x(s))}} $$ and this 
model leads to the classical tomography problem with $f(x)=\frac{1}{c_o+\epsilon(x)}$.

\bigskip

For broken rays and a known obstacle, we know that the acoustic wave $u(x)$ propagates along the known straight line segments 
$\tilde{\gamma_1}$ and $\tilde{\gamma_2}$. 

\bigskip
 
Then $$ T(A,B)=\int_{\gamma}\frac{ds}{c_o + \epsilon{(x(s))}}=\int_{\tilde{\gamma_1}}\frac{ds}{c_o + \epsilon{(x(s))}} + \int_{\tilde{\gamma_2}}\frac{ds}{c_o+\epsilon{(x(s))}} $$

\bigskip

This model leads to the Broken Ray Tomography Problem. Reflection and the presence of an obstacle are unique to the broken ray model 
compared to the classical tomography problem. The broken ray model is based on travel time computation and line integrals over broken and 
unbroken rays; the line integrals of classical tomography are over straight line segments. Moreover, the broken ray model requires tomographic 
reconstruction in the presence of an obstacle. For both models, we choose $f(x)=\frac{1}{c_o+\epsilon(x)}$. This physical model corresponds to 
a wave propagation environment that has almost constant wave propagation speed. Examples of such environments could be pollution plumes in the 
oceans or the atmosphere or regions of high concentration of minerals in water. Tomographic imaging of environmental pollution such as the recent 
oil spill in the Gulf of Mexico is a target application of broken ray tomography. The next chapter gives a geometric optics solution of the wave 
equation concentrated in a small neighborhood of a broken ray, which shows that we can work directly with broken rays when modeling mirror-like 
reflection of waves propagating approximately along broken rays.

\section{Geometric Optics Construction of a Broken Ray}
 
Following the approach in \cite{E1,E2}, I will construct geometric optics solutions of the wave equation, see \cite{E5}, in a small neighborhood of a broken ray. 

Consider a wave, or signal, described by the wave equation
\begin{equation}\label{waveequation}
u_{tt} - c^2\Delta{u}=0
\end{equation}

where $c=c_o>0$ is the constant speed of wave propagation and

$$ u |_{\partial{\Omega_1}} = 0 $$ 

for $t>0$ and $\Omega_1(t) \subset \Omega_0$, where $\Omega_1(t)$ is a moving convex obstacle with a piece-wise smooth boundary. We discussed in the introduction that under certain conditions such as small variations of the speed of wave propagation, we can consider wave propagation along the geodesic of a wave with constant speed $c_o$ and that is why we consider a form of the wave equation with constant c.

\bigskip

The geometric optics approach to finding a solution of the wave equation that approximates a broken ray is based on the idea of looking for a solution of the form   
\begin{equation}\label{gosolution}
u(x,t)=e^{-ikct+ikw \cdot x}\sum_{p=0}^N \frac{a_p}{k^p} + e^{-ikct+i k \psi(w, x) }\sum_{p=0}^N \frac{b_p}{k^p} + U_N(x,t,w)
\end{equation}

\bigskip

Starting from a plane wave solution $e^{-ikct+ikw \cdot x}$ of the wave equation, 
where k is large, i.e. $k \rightarrow \infty$, and $w=(w_1,w_2)$, $|w|=1$, is the unit direction vector of 
wave propagation, construct a geometric optics solution of \ref{waveequation} where

\bigskip

$$ e^{-ikct+ikw \cdot x}\sum_{p=0}^N \frac{a_p(x,t,w)}{k^p} $$
approximates the broken ray before reflection as a sum of plane waves with varying amplitudes
and 
$$e^{-ikct+ik \psi(w, x) }\sum_{p=0}^N \frac{b_p(x,t,w)}{k^p}$$
approximates the broken ray after reflection as a sum of waves with a nonlinear phase $\psi(w,x)$.
The term $U_N(x,t,w)$ makes the solution $u(x,t)$ exact. 

\bigskip
 
Plugging the target solution in the wave equation leads to the equation
$$ (\frac{\partial^2}{\partial{t^2}}- c^2\Delta)(e^{-ikct+ikw \cdot x}\sum_{p=0}^N \frac{a_p}{k^p} + e^{-ikct+i k \psi(w, x)}\sum_{p=0}^N \frac{b_p}{k^p}+ U_{N}(x,t,w)) = 0 $$

The amplitudes $a_p$ and $b_p$ are constructed for $p=0,1,2,...$ by plugging in the wave equation the target solution for $N=0,1,2...$. In the resulting expansion, equate to 0 the sum of terms for equal powers of k for each
of the two wave coefficients $e^{-ikct+ikw \cdot x}$ and $e^{-ikct+ik \psi(w, x) }$. This leads to a set of equations for the amplitudes $a_p(x,t,w)$ and $b_p(x,t,w)$. For example, for ${a_0}(x,t,w)$ and $N=0$ this step leads to the equation
\begin{equation}
\begin{split}
(\frac{\partial^2}{\partial{t^2}}- c^2\Delta)(e^{-ikct+ikw \cdot x}a_{0}(x,t,w)) = \\
-k^2c^2e^{-ikct+ikw \cdot x}{a_0}(x,t,w) + e^{-ikct+ikw \cdot x}\frac{\partial^2{{a_0}(x,t,w)}}{\partial{t^2}} - \\ 
-2ikce^{-ikct+ikw \cdot x}\frac{\partial{{a_0}(x,t,w)}}{\partial{t}} + \\
- c^2( -k^2 {w_1}^2e^{-ikct+ikw \cdot x}{a_0}(x,t,w) + e^{-ikct+ikw \cdot x}\frac{\partial^2{{a_0}(x,t,w)}}{\partial{{x_1}^2}} + \\
+2ik{w_1}e^{-ikct+ikw \cdot x}\frac{\partial{{a_0}(x,t,w)}}{\partial{{x_1}}} + \\
-k^2 {w_2}^2e^{-ikct+ikw \cdot x}{a_0}(x,t,w) + e^{-ikct+ikw \cdot x}\frac{\partial^2{{a_0}(x,t,w)}}{\partial{{x_2}^2}} -\\ 
+2ik{w_2}e^{-ikct+ikw \cdot x}\frac{\partial{{a_0}(x,t,w)}}{\partial{{x_2}}} ) = 0
\end{split}
\end{equation}

\bigskip
  
where $w=(w_1,w_2)$, $|w|=1$, is the unit direction vector of wave propagation. Cancelling the term $e^{-ikct+ikw \cdot x}$ gives  

\bigskip

\begin{equation}
\begin{split}
-k^2c^2{a_0}(x,t,w) + \frac{\partial^2{{a_0}(x,t,w)}}{\partial{t^2}} - 
2ikc\frac{\partial{{a_0}(x,t,w)}}{\partial{t}} + \\
- c^2( -k^2 {w_1}^2{a_0}(x,t,w) + \frac{\partial^2{{a_0}(x,t,w)}}{\partial{{x_1}^2}}
+2ik{w_1}\frac{\partial{{a_0}(x,t,w)}}{\partial{{x_1}}} - \\
-k^2 {w_2}^2{a_0}(x,t,w) + \frac{\partial^2{{a_0}(x,t,w)}}{\partial{{x_2}^2}} + 
2ik{w_2}\frac{\partial{{a_0}(x,t,w)}}{\partial{{x_2}}} ) = 0
\end{split}
\end{equation}

\bigskip

Collecting terms and using the fact that $w^2={w_1}^2+{w_2}^2$ and that the coefficient for k in the above polynomial of k must be 0 leads to
$$ \frac{\partial{a_0}(x,t,w)}{{\partial{t}}} + cw \cdot \frac{\partial{a_0(x,t,w)}}{\partial{x}}=0$$
 
Make the change of variables $x = sw + \tau w_\bot$, where $w_\bot=(-w_2,w_1)$, and $w \cdot w_\bot =0$. Then
$$s=w \cdot x$$
$$\tau=w_\bot \cdot x$$
Then the last equation can be written in the form
$$\frac{\partial{a_0(x,t,w)}}{\partial{t}}+c\frac{\partial{a_0(x,t,w)}}{\partial{s}}=0$$   

The solution of this equation by the method of characteristics is
$$ a_0(s, \tau, t, w) = f(s-ct,\tau, w) $$

where $f$ is an arbitrary smooth function. 

\bigskip

In order to localize the solution in a neighborhood of the broken ray, the solution is multiplied by cut-off functions.

\bigskip

Continuing in this manner and plugging into the wave equation the sum 

$e^{-ikct+ikw \cdot x}\sum_{p=0}^N \frac{a_p(x,t,w)}{k^p} + e^{-ikct+i k \psi(w, x)}\sum_{p=0}^N \frac{b_p}{k^p}+ U_{N}(x,t,w)$ 

for successive values of $N=1,2,...$ and equating to 0 in the resulting polynomial of k the sum of terms with a factor $e^{-ikct+ikw \cdot x}$ for equal values of $\frac{1}{k^p}$ leads to the recurrence relations
$$ 2ic(\frac{\partial}{{\partial{t}}} + cw \cdot \frac{\partial}{\partial{x}})a_{j}(x,t,w) = (\frac{\partial^2}{\partial{t^2}}- c^2\Delta)a_{j-1}(x,t,w)  $$

for $j=1,2,...$

Similarly for $b_{0}(x,t,w)$
\begin{equation}
\begin{split}
(\frac{\partial^2}{\partial{t^2}}- c^2\Delta)(e^{-ikct+i k \psi(w, x) } b_{0}(x,t,w)) = \\
-k^2c^2e^{-ikct+i k \psi(w, x) } b_{0}(x,t,w) + e^{-ikct+i k \psi(w, x) }\frac{\partial^2{{b_0}(x,t,w)}}{\partial{t^2}} - \\ 
-2ikce^{-ikct+i k \psi(w, x) }\frac{\partial{{b_0}(x,t,w)}}{\partial{t}} - \\
- c^2( -k^2 (\frac{\partial{\psi(x,w)}}{\partial{x_1}})^2e^{-ikct+i k \psi(w, x) }{b_0}(x,t,w) + \\
+ e^{-ikct+i k \psi(w, x) } \frac{\partial^2{{b_0}(x,t,w)}}{\partial{{x_1}^2}} - \\
-2ike^{-ikct+i k \psi(w, x) } \frac{\partial{\psi(x,w)}}{\partial{x_1}} \frac{\partial{{b_0}(x,t,w)}}{\partial{{x_1}}} - \\
-k^2 (\frac{\partial{\psi(x,w)}}{\partial{x_2}})^2e^{-ikct+i k \psi(w, x) }{b_0}(x,t,w) + e^{-ikct+i k \psi(w, x) }\frac{\partial^2{{b_0}(x,t,w)}}{\partial{{x_2}^2}} -\\ 
-2ike^{-ikct+i k \psi(w, x) } \frac{\partial{\psi(x,w)}}{\partial{x_2}} \frac{\partial{{b_0}(x,t,w)}}{\partial{{x_2}}} ) = 0
\end{split}
\end{equation}
 
Cancelling $e^{-ikct+i k \psi(w, x) }$,  collecting terms and setting to 0 the coefficients for the powers of k in the above polynomial of k leads to the eikonal equation 
$$|\nabla{\psi}|^2=1$$  
and
$$ \frac{\partial{{b_0}(x,t,w)}}{\partial{t}} -c(\frac{\partial{\psi(x,w)}}{\partial{x_1}}\frac{\partial{{b_0}(x,t,w)}}{\partial{{x_1}}} + \frac{\partial{\psi(x,w)}}{\partial{x_2}} \frac{\partial{{b_0}(x,t,w)}}{\partial{{x_2}}})=0$$

and in the general case for $ j \geq 1$ the latter equation has the form
$$ 2ic(\frac{\partial{{b_j}(x,t,w)}}{\partial{t}} -c\nabla{\psi} \cdot \nabla{{b_j}(x,t,w)})  =(\frac{\partial^2}{\partial{t^2}}- c^2\Delta)b_{j-1}(x,t,w) $$

In the recursive construction of $a_N$ and $b_N$, i.e. $p=N$ in the target solution sum, there are no terms to complete the recursive equations for the finite number of terms of order $O(\frac{1}{k^N})$. Therefore,

$$ (\frac{\partial^2}{\partial{t^2}}- c^2\Delta)(U_N(x,t,w)) = O(\frac{1}{k^N})$$
and
$$ U_N|_{\partial{\Omega_1}} = 0 $$

As discussed in \cite{E4}, this implies that $U_N=O(\frac{1}{k^N})$

The support of the constructed solution is in a small neighborhood of a broken ray. 
This neighborhood is defined by the cutoff functions in the solution. We can measure the travel 
time of an approximately linear signal sent from the beginning of the broken ray and received at the endpoint. 
Therefore, we can work directly with broken rays instead of their approximations. 

\section{Numerical Methods for Broken Ray Tomography}\label{algorithms}

The broken ray tomography problem is well-posed\cite{E1}. This key result implies that numerical methods for image 
reconstruction through broken ray tomography should be more accurate and stable compared to the image reconstruction 
methods of classical tomography. Indeed, the classical tomography problem for a domain with an obstacle is ill-posed \cite{Na1} 
and therefore, the well-posedness of the broken ray 
tomography problem is a crucial advantage for numerical methods for broken ray tomography. 
The well-posedness of the broken ray tomography problem follows from \cite{E1} and the estimate from this paper

\begin{equation}\label{BRTEstimate}
\int_{\Omega_0 \backslash \Omega_1} |f(x)|^2 dx \leq C \int_{0}^{l_0} \int_{0}^{2\pi} (  |\frac{\partial{w(x(s),\theta(\phi))}}{\partial{s}}|^2 + |\frac{\partial{w(x(s),\theta(\phi))}}{\partial{\phi}}|^2   ) d{\phi}ds
\end{equation}

The notation for this estimate from \cite{E1,E3} is as follows. Let $\gamma_{x,\theta}$ be a broken or unbroken ray starting on point $T$ in $\partial{\Omega_0}$ with an endpoint $R$ in $\overline{\Omega_0} \backslash \Omega_1$ and direction angle $\theta \in S^1$ at point $x=R$. Let $w(x,\theta)=\int_{\gamma_{x,\theta}} fds$. Consider that the values of $w(x,\theta)$ are known, and equal to $C_{\gamma}$, for all $\theta \in S^1$ when $x \in \partial{\Omega_0}$. In this notation, $\theta(\phi)=(\cos{\phi}, \sin{\phi})$, $0 \leq \phi < 2 \pi$, and $l_0$ is the length of $\partial{\Omega_0}$.

\bigskip

This estimate implies that the solution of the broken ray tomography problem is uniformly bounded in the $L^2$ norm. Indeed, the smooth function $w(x, \theta)$ is defined for every $x$ in the domain $\overline{\Omega_0 \backslash \Omega_1} \subset B_R$, where $B_R = \{x : |x| < R \}$, of the function $f(x)$ and $0 \leq \theta \leq 2 \pi$. Therefore, its derivatives with respect to $x$ and $\theta$ are bounded. Therefore, $\|f\|_{L^2(\Omega_0 \backslash \Omega_1)}$ is bounded.

Consider a perturbation of the input data $w_1(x,\theta)$ by a small amount to $w_2(x,\theta)$. Then the energy
of the error can be estimated by 

\begin{equation}\label{errorestimate1}
\begin{split}
 \int_{\Omega_0 \backslash \Omega_1} |f_1(x)-f_2(x)|^2 dx \leq C \int_{0}^{l_0} \int_{0}^{2\pi} (  |\frac{\partial{w_1(x(s),\theta(\phi))}}{\partial{s}} - \frac{\partial{w_2(x(s),\theta(\phi))}}{\partial{s}}|^2 + \\ 
+ |\frac{\partial{w_1(x(s),\theta(\phi))}}{\partial{\phi}}-\frac{\partial{w_2(x(s),\theta(\phi))}}{\partial{\phi}}|^2   ) d{\phi}ds
\end{split}
\end{equation}

The right hand side of the above inequality is bounded therefore the energy of the error $\int_{\Omega_0 \backslash \Omega_1} |f_1(x)-f_2(x)|^2$ is bounded. 

\bigskip

In developing numerical methods for solving the tomography problem in the presence of a reflecting obstacle we will use this result indirectly.
Instead, we will directly rely on and build the numerical algorithms on the assumptions necessary for the above estimate and required by the theory of 
broken ray tomography. The key assumption and requirement of the theory of tomography in the presence of a reflecting obstacle is that all rays in the 
domain, both broken and unbroken, starting and ending at the observation boundary should be considered in order for the above estimate to hold. 
In other words, the problem is guaranteed to become well-posed when we consider all such rays.

\section{Broken Ray Tomography with the Kaczmarz Method}

In this work we apply the classical Kaczmarz method\cite{K} to the Broken Ray Tomography Problem and show that broken ray tomography can be 
successfully performed with a well-known numerical method. We show through numerical experiments that linear systems corresponding to mixtures 
of broken and unbroken rays have more accurate solutions compared to linear systems corresponding to the same number of unbroken rays only. 
These numerical experiments show that broken ray tomography with the Kaczmarz method in the presence of a reflecting obstacle has much smaller 
error between the original and reconstructed image compared to classical tomography with the Kaczmarz method. 

\bigskip

It is well-known that reflections from an obstacle can improve the accuracy of tomographic imaging\cite{Na2}. 
The unique focus of the numerical solution of the Broken Ray Tomography problem is on determining the composition and properties of finite sets of 
rays that lead to linear systems that have more accurate and stable solutions when solved by the Kaczmarz method. We give a numerical solution of 
the broken ray tomography problem that restricts the  solution of the tomography problem and equivalent problems to the solution of a class of 
linear systems that were obtained from sets of rays with favorable properties. 

\bigskip

I first describe the Kaczmarz method in the context of tomography in the presence of a reflecting obstacle and then I 
study the properties of finite sets of rays for solving the Broken Ray Tomography problem by the Kaczmarz method. 
The Kaczmarz method is the original and key method used in tomographic algebraic reconstruction algorithms and in tomography it 
is referred to as ART. It is very flexible and works with rays with a wide range of known geometries.  

\bigskip

Consider a square domain $M \subset \mathbb{R}^2$ that contains the observable domain $\Omega_0$ i.e. $\Omega_1 \subset \Omega_0 \subset M \subset \mathbb{R}^2$. 
The square M is subdivided into a grid of $N^2$ squares or cells of size d. We define $f(x)=0$ in $\Omega_1$, inside the obstacle, and in 
$M \backslash \Omega_0$, outside the observation boundary, and look for a good approximation of $f(x)$ in each cell of the grid. The value of $f(x)$ is 
considered to be constant in each cell. We arrange linearly the $N^2$ cells into a column vector $f=(f_{1,1},..., f_{N,N})$ where $f_{i,j}$ is the value 
of $f(x)$ in cell $M[i][j]$.

\bigskip

When a ray j intersects cell i of the vector f, then the length of the ray segment that the cell cuts from the ray is the weight of the cell with 
respect to this ray or $w[j][i]$. The matrix of weights for all cells and all rays is denoted as W and has r rows and $N^2$ columns, 
where r is the total number of rays. Let $T_j>0$ be the travel time of ray j. Let $$ \sum_{i=1}^{N^2} w[j][i]f[i] = T_j $$

This is the equation of a hyperplane in $\mathbb{R}^{N^2}$ with normal vector $w_{j}$.
Then the linear system of equations for all rays' travel-times can be expressed as $$Wf=T$$ where T is the column vector of ray travel times. 
This linear system can be very large and even in initial numerical simulations has several thousand equations. 
The Kaczmarz method is an iterative method for solving large linear systems and it starts with an initial guess $f^{(0)}$. 
Then

$$ f^{(i+1)} = f^{(i)} + \frac{(T_{h}-w_{h} \cdot f^{(i)})}{w_{h} \cdot w_{h}}w_{h} $$

where $h = (i \bmod r) + 1$ and r is the number of rays or number of rows of the linear system.

\bigskip

Stefan Kaczmarz proved in \cite{K} the convergence of his iterative method for the solution of large regular linear systems. 
Tanabe proved  in \cite{T} that the Kaczmarz method will converge for any system of linear equations with nonzero rows.
In general, estimating the Kaczmarz method's speed of convergence is still an open problem.

\bigskip

The Kaczmarz method can be applied when the ray geometry is known and it is feasible to compute the intersection of the rays 
with the grid cells. In broken ray tomography the geometry of the broken ray is different from a straight line segment, 
however this geometry is known. In addition, the geometry and location of the obstacle are known and we can compute the 
intersections of the broken rays with the domain's cells outside the obstacle. Given an overdetermined system of rays with 
known geometries that includes both unbroken and broken rays, the Kaczmarz method can therefore be applied in order to reconstruct 
the velocity structure of an environment with a known obstacle. This is the proposed numerical solution of the broken ray tomography problem:

\bigskip
\textbf{
Denote by $\mathbb{A}$ the set of all broken and unbroken rays that start and end at the observation boundary.
Expand the set of rays that are used in the tomographic reconstruction to $\mathbb{A}$ or a discrete approximation
of $\mathbb{A}$.
}
\bigskip

The theory of well-posedness of the broken ray tomography problem is based on consideration of the 
set of all rays, broken and unbroken, that start and end at the observation boundary. In other words, 
we need all rays if we want the reconstruction problem to be well-posed. There is an infinite number of 
such rays and for a numerical solution we need to approximate this condition in order to get as close as 
possible to the requirement for including all rays in the reconstruction. Our goal then is to characterize 
those finite sets of rays that approximate well the set of all rays. We start by defining what it means to 
approximate well. Consider an instance of the broken ray tomography problem with fixed domain $\Omega_0$, 
convex obstacle $\Omega_1$, function f defined in $\Omega_0 \backslash \Omega_1$, and grid and cell size. 
Let s be a set of rays for reconstructing f that start and end at $\partial{\Omega_0}$. When signal travel 
time data is measured along the rays s, this set of rays leads to a linear system l. We solve l by the Kaczmarz 
method. Let $\hat{f}$ be the solution of l. Then $$e(s)=\|f-\hat{f}\|$$ is the error function of s induced by 
this instance of the broken ray tomography problem. In other words, $e(s)$ is the error between the 
reconstructed and original value of f. The Euclidean and max norms or other norms can be chosen. 

\bigskip

Therefore, when solving a given instance of the broken ray tomography problem, we would like to select a set or 
sets s leading to a function e(s) that turns the given instance of the tomography problem into a well-posed one. 
The solution of the broken ray tomography problem found with the discrete set of rays s approximates the solution 
found with $\mathbb{A}$. The theory of broken ray tomography implies that $\mathbb{A}$ leads to a well-posed instance 
of the broken ray tomography problem.

\bigskip

Our goal is to have an efficient method of generating s or selecting from different $s_1, s_2,...$ without solving the 
associated linear systems $l_1, l_2,...$. Let two rays be equivalent if they intersect the same cells of the grid for 
a given instance of the broken ray tomography problem. This equivalence relation on the set of all broken and unbroken 
rays leads to a finite number of equivalence classes of rays. Indeed, each class is equivalent to a discrete ray composed 
from a finite number of cells from the grid. There are clearly a finite number of such discrete rays. This suggests a
strategy for selecting s and a criteria for good approximation of $\mathbb{A}$. Instead of seeking 
to minimize e(s), we select a finite set s such that each equivalence class of rays for the given instance of the 
broken ray tomography problem has at least one of its elements in s. Then s can be considered a finite set that 
approximates $\mathbb{A}$. As a finite set of sets of measure 0, s also has measure 0. For large grids, the number 
of elements in this set is a large but manageable instance of a discrete graph problem:

\bigskip

Consider the cells of the grid that intersect $\partial{\Omega_0}$ as the vertices of a graph $G_1$ and the cells of the grid that
intersect $\partial{\Omega_1}$ as the vertices of a graph $G_2$. A discrete unbroken ray is an edge between two vertices from $G_1$ 
while a discrete broken ray is composed of two edges with one vertice from $G_1$ and a shared vertice from $G_2$.
If the number of vertices of $G_1$ is $V_1$ and the number of vertices of $G_2$ is $V_2$ then the order of the number of elements 
of a discrete approximation of $\mathbb{A}$ is $$O({V_1}^2 + V_1V_2)$$ 

For a given instance of the broken ray tomography problem with an unknown f, 
it is possible to precompute such an approximation of $\mathbb{A}$. 
This is possible because the geometry of all rays for a given instance of the broken ray tomography problem is known because of the 
linearization of ray propagation and the law of reflection. Data is collected for the rays in the set s and the resulting linear system 
solved by the Kaczmarz method. The justification for such a procedure is the improved accuracy and stability due to the well-posedness 
of the broken ray tomography problem. Such a finite set containing representatives from all equivalence classes of rays is still 
very large. In the numerical implementation described in the next chapter rays from this set are selected randomly 
without explicitly generating the whole set.

\section{Numerical Method Implementation}
Numerical methods for broken ray tomography in the presence of an obstacle should in theory be more accurate and stable 
reconstruction methods compared to classical tomographic reconstruction algorithms. This is an elegant result implied 
by the well-posedness of the broken ray tomography problem proved in \cite{E1}. It implies that given a sufficiently 
large overdetermined linear system that corresponds to a large number of unbroken rays, 
we can improve the accuracy of the solution of the system by considering a large overdetermined system corresponding 
to the same number of rays a portion of which are broken. Moreover, broken rays are introduced naturally into the system 
due to reflection.

\bigskip
 
Consider a fixed domain, grid, obstacle, signal wave carrier, and function $f$ to be reconstructed. Consider all sets L 
of broken rays and unbroken rays in the domain. Some sets may contain only unbroken rays, some sets may contain only broken 
rays and other sets may contain both broken and unbroken rays. Each ray element of such a set is determined by its geometry 
and travel time between start and endpoint. Each such set determines a linear system for reconstructing f by tomographic 
reconstruction methods such as the Kaczmarz method. Therefore, to each such set of rays corresponds a linear system, 
or a set of equations, for reconstructing f. We conjecture that improved accuracy and stability of the reconstructed solution 
will become visible for a sufficiently large number of broken and unbroken rays that are uniformly distributed in the set of equivalence 
classes of all rays. This approach approximates the requirements of the theory of broken ray tomography which in turn implies
well-posedness of the tomography problem in the presence of a reflecting obstacle. This section describes the implementation of
our numerical solution of the broken ray tomography problem.

\bigskip
 
We consider a uniform spatial distribution in which the broken and unbroken rays are uniformly distributed in the domain.  
Our goal is to approximate the set of all rays therefore other distributions will favor some elements of the set of all rays 
and exclude other elements of this set.

\bigskip

In order to verify the effectiveness of broken ray tomography with an obstacle, I generated an environment with a known obstacle and 
velocity that varies continuously in the observation region outside the obstacle. The following types of experiments were performed 
in order to compare broken ray tomography with tomography with rays that are straight line segments: a class of experiments for 
a fixed instance of the broken ray tomography problem and different ray sets approximating the set of all rays $\mathbb{A}$,
a class of exeriments where the size of the obstacle was varied, a class of experiments where the ratio of broken and unbroken rays 
was varied.

\bigskip

A good numerical recipe for broken ray tomography should result in a reconstructed $f$ that is closer in some norm to the true $f$. 
The basic algorithm for verifying the effectiveness of broken ray tomography can be summarized as follows:

1. Select a square domain $M$.

2. Partition M into $N^2$ square cells of size $d$. In other words, the square grid M will have $N$ cells of size d per row. 

3. Specify an observation boundary $\partial{\Omega_0}$ as a circle with center in M and configurable radius.

4. Specify a known obstacle in $M$ and represent it with the list of its boundary points.

5. Generate a configurable number of transmitters and receivers located on the observation boundary.

6. Generate broken rays by selecting randomly transmitters and points on the obstacle's boundary that are visible from the transmitters. 
Each pair of transmitter and obstacle boundary point can have exactly one corresponding receiver that receives a signal sent from the 
transmitter that is reflected at the obstacle boundary point according to the law of reflection. 
In this work, we consider specular reflection i.e. reflection in a single direction such that angle of incidence is equal to the angle of 
reflection. We have performed and will report experiments with Lambertian reflection as well.

\bigskip

In this network, there are no other blocking transmitters and receivers on the broken ray segments connecting the reflection point on the 
obstacle with a transmitter and receiver pair. The choice of a circular observation boundary ensures this.

\bigskip

Each transmitter receiver pair is considered only once: this corresponds to an implementation model in which at a given time a
transmitter transmits in only one direction. In order to consider all rays, this restriction should be removed.

7. Generate unbroken rays by selecting random pairs of transmitters and receivers that are not connected by broken rays: this
corresponds to an implementation model in which at a given time a transmitter transmits in only one direction. 
In order to consider all rays, this restriction should be removed. Each pair has exactly one transmitter and exactly one receiver 
and the transmitter and receiver are visible from each other. For example, transmitters and receivers that are connected by a line 
segment that intersects the obstacle are not considered as endpoints of unbroken rays. 

8. Compute the weight matrix W of broken and unbroken ray intersections with the grid M. Elements w[j][i] of the matrix that are
outside the observation boundary and inside the obstacle are set to 0.

9. Choose a continuous function $f(x,y) \in \mathbb{R}^2$. Discretize $f$ by considering its domain as the cells of the grid. 
Set the value of $f$ in each cell of the grid to be constant. The result $f[1]$, $f[2]$,...,$f[N^2]$ are the values of $f$ in 
the grid's cells.

10. Use the known values of $f$ from 9. to find the traveltime of a signal traveling along the broken or unbroken rays from 6. and 7.
The physical model implies that the value of $f$ in a given cell is equal to the reciprocal of the speed of 
ultrasound in the given cell for positive f or 0 when the cell is inside the obstacle or outside the domain $\Omega_0$. 
Therefore, multiplying the length of the section of the ray in the cell to the value of $f$ in the cell gives the traveltime of ultrasound in the cell. By knowing the geometry of a ray, we know which cells of the grid the ray will 
intersect and we compute and add the traveltimes for each intersected cell to find the sum equal to the total traveltime of an 
ultrasonic signal along a given ray. This numerical integration along the path of ray j gives the traveltime $T_j$ of the ray for the 
chosen $f$. When this procedure is done for all rays, we know the vector $T$ of travel times for all rays. 

11. Steps 8. and 10. give the coefficients matrix and right hand side of a linear system $Wx=T$. This system is very large and is solved with the Kaczmarz or other methods to find the value of the vector $x$. The vector $x$ has the same meaning as the vector $f$, however, $x$ is reconstructed numerically while $f$ is chosen a priori. 

12. Now we can compare $f$ and $x$ and see how well the reconstructed value matches the true value of f.

\bigskip

The numerical method is implemented in an original and custom Java software tomography framework called Euler. 
The program from \cite{G} is another Java tomography framework for algebraic reconstruction that I have studied.

\section{Experimental Results}

Table \ref{BRTARTComparison} summarizes experimental results for tomographic reconstruction of a 
function $f(x,y)=K\sqrt{(x-x_0)^2+(y-y_0)^2}$ in a square domain with 4096 cells, or 64 cells per row 
where $(x_0, y_0)$ is the center of the domain. The size of each cell is 13 points or units. 
The domain contains a square obstacle with 30 cells per row, and the circular observation boundary 
enclosing the obstacle has radius 350 units. There are 512 transmitters and 512 receivers along the 
boundary at equal angles between neighbor transmitters and the center of the observation boundary and 
between neighbor receivers and the center of the observation boundary. Each row of the table corresponds 
to a numerical solution for the function $f(x,y)$ obtained with the same number of rays. 
Row 1 of  Table \ref{BRTARTComparison} shows results for an experiment with 126050 unbroken rays. These rays
are an approximation of all possible unbroken rays between the transmitters and receivers that do not intersect 
the obstacle. Row 2 of Table \ref{BRTARTComparison} shows tomographic reconstruction for the same domain, obstacle and 
observation boundary with broken and unbroken rays. The total number of broken and unbroken rays in the 
domain is much larger compared to the total number of unbroken rays. In order to compare the effectiveness 
of broken ray tomography for the same number of rays, we take a random sample that has the same size 126050 
as the number of unbroken rays in the solution with unbroken rays only. The ratio of broken and unbroken 
rays is 1:1 i.e. there are 63025 broken and 63025 unbroken rays in the sample that are chosen approximately uniformly.

\bigskip

The error column shows the average error per cell between the original and reconstructed value of f. 
The number of iterations column shows the number of iterations of the Kaczmarz method before it converges to a solution. 
The criteria for  convergence is that for at least two consecutive steps the currently computed value of f must be within a 
radius of convergence from the value of f computed at the previous step. I use the stronger max norm in addition to the classical 
Euclidean distance. In conclusion, the experimental results indicate that in the presence of an obstacle broken ray tomography 
is a more accurate approach to tomographic reconstruction compared to classical tomography. For example, visual inspection of the reconstructed images in 
figure \ref{126050unbrokenrays_reconstructed} and figure \ref{126050brokenandunbrokenrays_reconstructed} shows that compared to 
the image reconstructed with unbroken rays only, the reconstructed broken ray tomography image is significantly more accurate.

\bigskip

Table \ref{BRTData1} shows the performance of our numerical solution of the broken ray tomography problem and compares it to the 
performance of ART for a fixed instance of the tomography problem and ten different ray sets for each method. Reconstruction error
increases when the fraction of unbroken rays in a ray set is close to 1 as shown by the results in Table \ref{BRTData2} for the 
same instance of the broken ray tomography problem as in Table \ref{BRTData2}.
Table \ref{BRTData3} compares the performance of the two methods when the size of the obstacle is varied.

\bigskip

In conclusion, we have observed in numerical experiments that broken ray tomography is on average three times more accurate compared to 
tomography without reflection. The theory of broken ray tomography predicts that the accuracy and advantages of broken ray tomography 
are much larger.

\clearpage

\begin{table}[h]
\begin{tabular}{|c|ccc|}
	\hline
Tomography Type &  Number of Rays &   Error  & Iterations  \\
  \hline
ART & 126050   & 1.80484955E-4 & 37144  \\
BRT with Kaczmarz method & 126050 & 4.820056689E-5 & 22728 \\
  \hline
\end{tabular}
\caption{Comparison between ART and Broken Ray Tomography with the Kaczmarz method. 
We compare tomography without reflection and tomography with reflection in the presence of a reflecting obstacle.\label{BRTARTComparison}}
\end{table}

\bigskip

\begin{table}[h]
\begin{tabular}{|c|cc|cc|}
	\hline
Experiment & ART Error  & ART Iterations & BRT Error  & BRT Iterations  \\
  \hline

1 & 1.104141e-004 & 71502 & 1.980839e-005 & 77928 \\ 
2 & 1.153358e-004 & 69675 &    9.159289e-005 & 52365 \\
3 & 1.582274e-004 & 46328 &  2.191676e-005 & 75798 \\
4 & 9.906414e-005 & 93348 &   1.629515e-005 & 89842 \\
5 & 1.579209e-004 & 45012 &    2.107154e-005 & 77348 \\
6 & 1.511201e-004 & 39476 &   2.113383e-005 & 80295 \\
7 & 1.349709e-004 & 54259 &   2.139517e-005 & 78698 \\
8 & 1.416369e-004 & 57246 &   1.001323e-004 & 40882 \\
9 & 1.313073e-004 & 63075 &   1.732388e-005 & 83562 \\
10 & 1.383719e-004 & 52971 &   2.189941e-005 & 75628 \\

  \hline

Average & 1.338370e-004  & 59289.2 & 3.525693e-005 & 73234.6 \\

  \hline
\end{tabular}
\caption{Error and number of iterations for BRT for a fixed number of 126050 rays with 50\% broken and 50\% unbroken rays.
The average error is 3.525693e-005 and the average number of iterations of the Kaczmarz method for finding a solution is
73234.600000. The results for ART tomography with 126050 unbroken rays are shown in the left two columns of the table. 
The average error for ART is 1.338370e-004 and the average number of iterations is 59289.200000.
\label{BRTData1}}
\end{table}

\clearpage

\bigskip

\begin{table}[h]
\begin{tabular}{|c|cc|}
	\hline
Fraction of Unbroken Rays & BRT Error & BRT Iterations  \\
  \hline

0.500000 & 9.209899e-005 & 58851 \\ 
0.550000 & 1.692470e-005 & 84958 \\
0.600000 & 2.651309e-005 & 60223 \\
0.650000 & 5.372332e-005 & 46297 \\
0.700000 & 2.299121e-005 & 57166 \\
0.750000 & 3.372089e-005 & 41231 \\
0.800000 & 2.778578e-005 & 44129 \\
0.850000 & 3.443016e-005 & 33874 \\
0.900000 & 4.696702e-005 & 43283 \\
0.950000 & 1.286371e-004 & 47213 \\

  \hline

Average & 4.837922e-005  & 51722.5 \\

  \hline
\end{tabular}
\caption{Performance of broken ray tomography for a fixed instance of the tomography problem with ray sets of 126050 rays with 
different fractions of broken and unbroken rays. When the fraction of unbroken rays is close to 1 the reconstruction error 
increases. The average error is 4.837922e-005 and the average number of iterations 51722.5. 
\label{BRTData2}}
\end{table}

\bigskip

\begin{table}[h]
\begin{tabular}{|c|cc|cc|}
	\hline
Side Length & ART Error  & ART Iterations & BRT Error  & BRT Iterations  \\
  \hline

130 & 1.722081e-004 & 57490 & 4.913426e-005 & 83495 \\
156 & 2.532327e-004 & 41801 & 4.810847e-004 & 36541 \\
182 & 2.786448e-004 & 37833 & 3.308767e-005 & 85148 \\
208 & 2.374432e-004 & 55282 & 2.977520e-005 & 84985 \\
234 & 1.696454e-004 & 135341 & 3.608453e-005 & 79498 \\
260 & 2.218613e-004 & 76752  & 2.544549e-005 & 84010 \\
286 & 1.888231e-004 & 91506  & 2.249251e-005 & 84914  \\
312 & 1.821901e-004 & 79507  & 2.492110e-005 & 81309 \\
338 & 2.053815e-004 & 51063 & 1.416251e-004 & 31923 \\
364 & 1.743495e-004 & 48376 & 2.129221e-005 & 81740 \\

  \hline

Average & 2.083780e-004 & 67495.1 & 8.649428e-005 & 73356.3 \\

  \hline
\end{tabular}
\caption{Error and number of iterations for BRT for a fixed number of 126050 rays with 50\% broken and 50\% unbroken rays.
Tomographic reconstruction is performed in ten experiments with different side lengths of the square obstacle.
The average error is 8.649428e-005 and the average number of iterations of the Kaczmarz method for finding a solution is
73356.3. The results for ART tomography with 126050 unbroken rays are shown in the left two columns of the table. 
The average error for ART is 2.083780e-004 and the average number of iterations is 67495.1.
\label{BRTData3}}
\end{table}

\clearpage

\begin{figure}
\begin{center}
\includegraphics[scale=0.50]{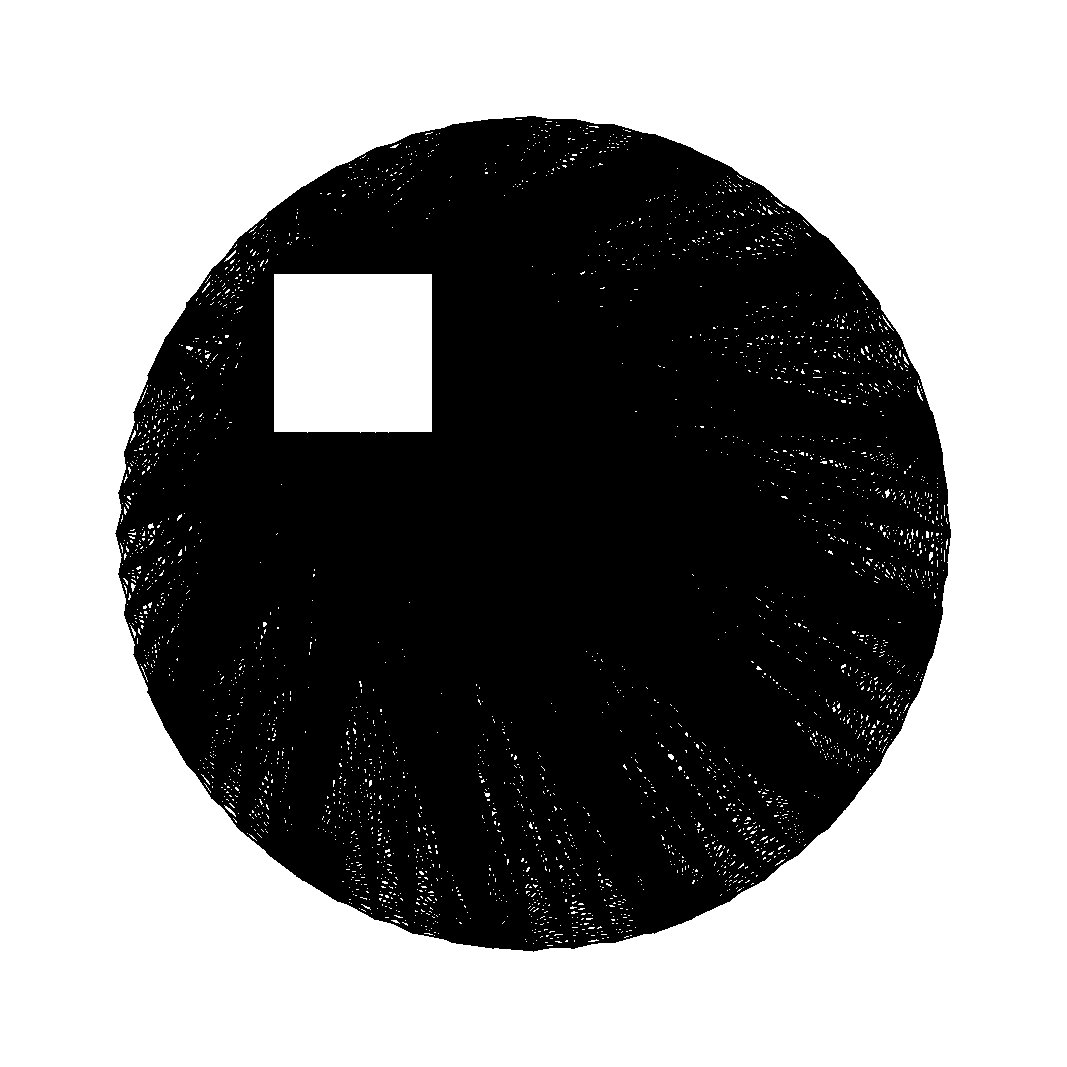} 
\caption{Tomography with broken rays and unbroken rays}
\label{allrays}
\end{center}
\end{figure}


\begin{figure}
\begin{center}
\includegraphics[scale=0.50]{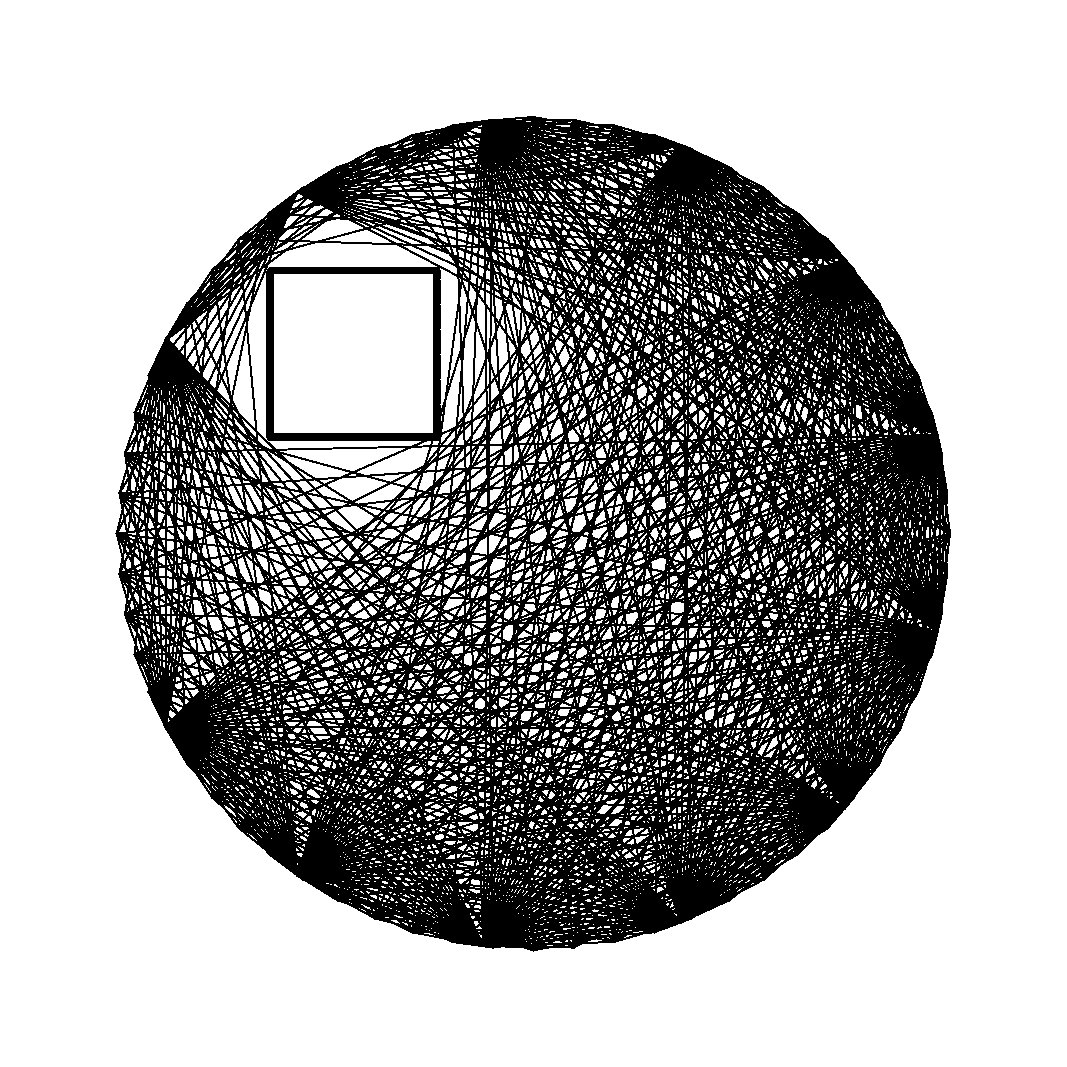} 
\caption{Tomography with unbroken rays only}
\label{unbrokenrays}
\end{center}
\end{figure}

\begin{figure}
\begin{center}
\includegraphics[scale=0.50]{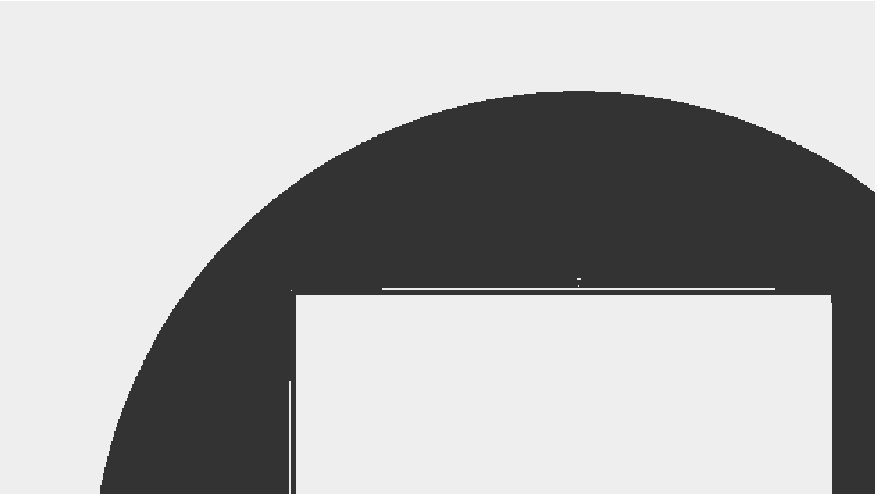} 
\caption{Tomography with 126050 unbroken rays only}
\label{126050unbrokenrays}
\end{center}
\end{figure}

\begin{figure}
\begin{center}
\includegraphics[scale=0.50]{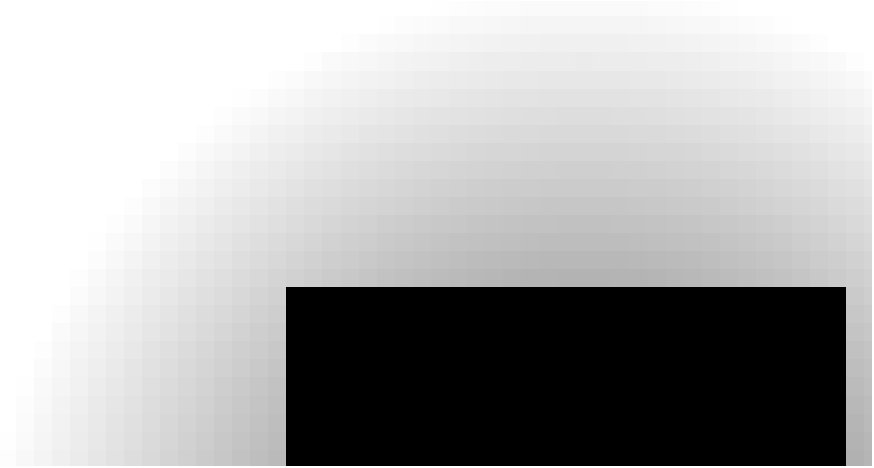} 
\caption{Test case for tomography with 126050 unbroken rays only. Original value of f is continuously varying grey around the black square obstacle.}
\label{126050unbrokenrays_original}
\end{center}
\end{figure}

\begin{figure}
\begin{center}
\includegraphics[scale=0.50]{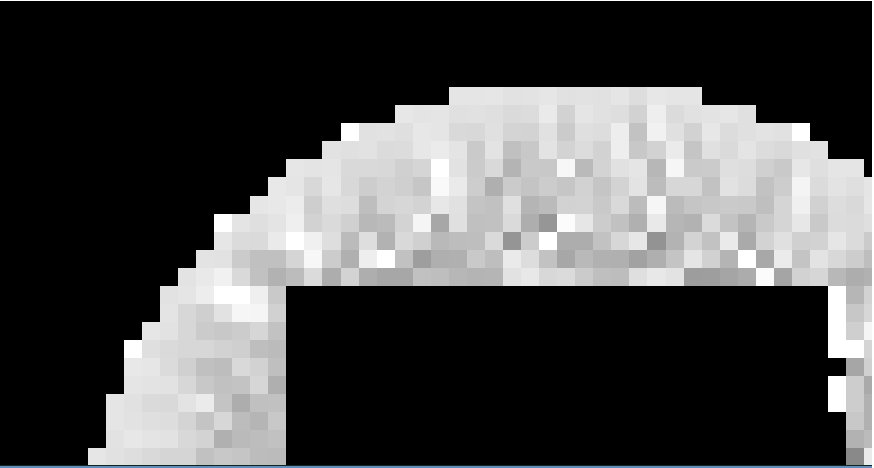} 
\caption{Tomography with 126050 unbroken rays only. Reconstructed f is inside the circular observation boundary and outside the black square obstacle.}
\label{126050unbrokenrays_reconstructed}
\end{center}
\end{figure}

\begin{figure}
\begin{center}
\includegraphics[scale=0.50]{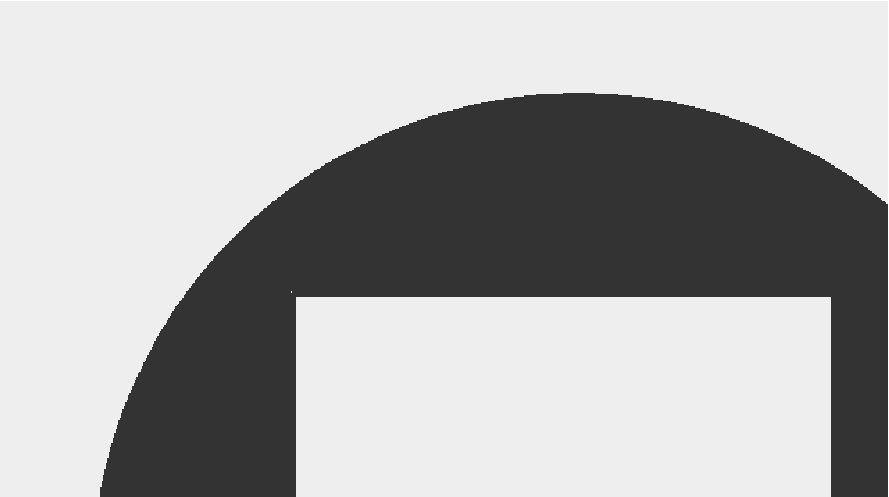} 
\caption{Tomography with 63025 broken and 63025 unbroken rays.}
\label{126050brokenandunbrokenrays}
\end{center}
\end{figure}

\begin{figure}
\begin{center}
\includegraphics[scale=0.50]{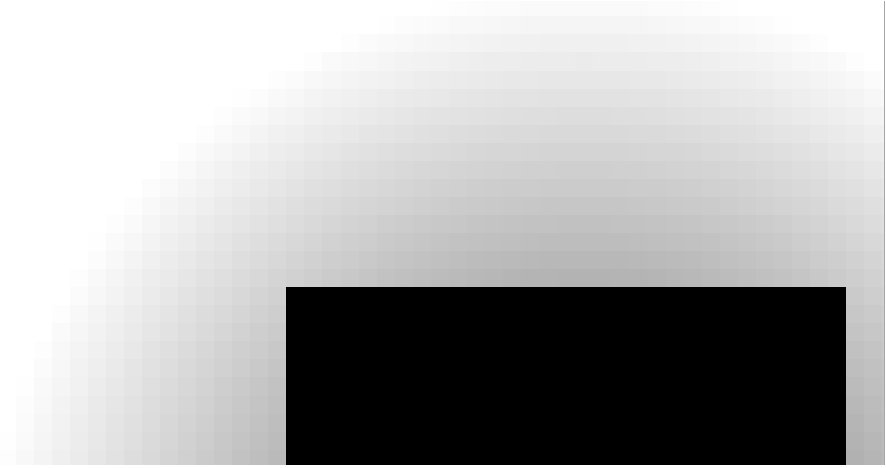} 
\caption{Test case for tomography with 63025 broken and 63025 unbroken rays. Original value of f is continuously varying grey around the black square obstacle.}
\label{126050brokenandunbrokenrays_original}
\end{center}
\end{figure}

\begin{figure}
\begin{center}
\includegraphics[scale=0.50]{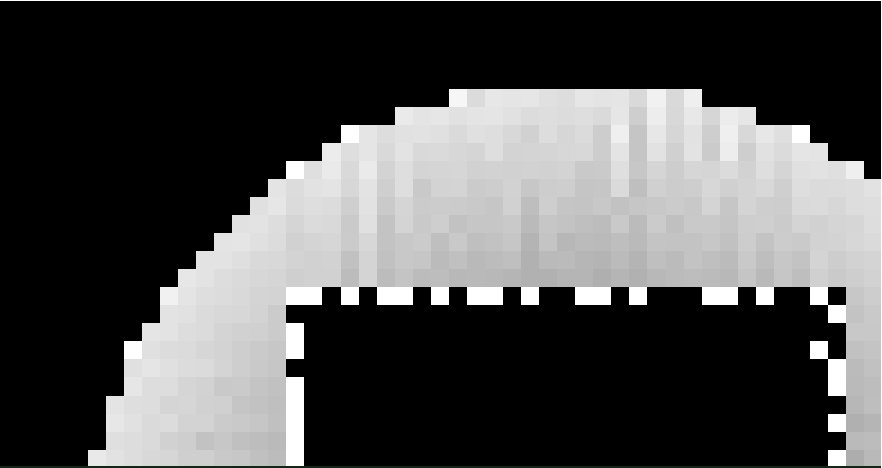} 
\caption{Tomography with 63025 broken and 63025 unbroken rays. Reconstructed f is inside the circular observation boundary and outside the black square obstacle.}
\label{126050brokenandunbrokenrays_reconstructed}
\end{center}
\end{figure}

\clearpage
\newpage

\section{Tomography in Three-Dimensional Domains with Obstacles}

The classical tomography problem for a three-dimensional domain without obstacles can be solved by taking two-dimensional plane cuts and solving the two-dimensional tomography 
problem in each of the planar cuts. This approach is not generally applicable for broken ray tomography because for a given plane cut a ray that is in the
plane of the cut before the reflection point is not guaranteed to be reflected in the same plane. Professor Eskin suggested a method for solving the 
three-dimensional tomography problem in a domain with one convex obstacle by considering plane cuts of $\Omega_0$ that do not intersect the obstacle. 
The tomography problem in each of these planes is well-posed and can be solved by the Radon transform and the algorithms of classical tomography 
\cite{Na1,K,F,NW}.  It is still interesting and practical to consider a solution of the three-dimensional tomography problem in a 
domain with an obstacle by plane cuts that intersect the obstacle. This approach is easier for applications because it is easier to 
choose the planes that cover the domain if we allow planes that intersect the obstacle.

\bigskip

We can consider solving the three-dimensional broken ray tomography problem when the obstacle is a cylinder or a parallelipiped by taking parallel plane 
cuts that are perpendicular to the cylinder's axis or to one of the sides of the parallelipiped. 
All reflected rays will remain in the same plane as their respective incident rays. In the resulting plane cuts, 
the two dimensional obstacle will be a circle or a rectangle. Rectangles and circles have piece-wise smooth boundaries 
required for obstacles by the theorems of broken ray tomography. It is an interesting question then whether generalized 
cylinders are the only three-dimensional shapes that obstacles can have so that the three-dimensional broken ray tomography 
problem can be solved by two-dimensional plane cuts which reduce the problem to the two-dimensional 
broken ray tomography problem in the union of the cuts that contains the whole obstacle.

\begin{proposition}\label{brt_3d}
Define $\partial{\Omega_1} \in \mathbb{R}^3$ by $F(x,y)=0$ where $F(x,y)$ is a smooth function that is independent of z and such that 
$\Omega_1$ is convex and has a smooth boundary. The Broken Ray Tomography Problem in the compact domain $\Omega_0 \subset \mathbb{R}^3$ 
and for an obstacle $\Omega_1 \subset \Omega_0$ can be solved by plane cuts of $\Omega_0$ a subset of which contains the obstacle 
$\Omega_1$. The cuts in that subset reduce the problem to the two dimensional broken ray tomography problem.
\end{proposition}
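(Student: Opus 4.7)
The plan is to exploit the translational symmetry of the obstacle along the $z$-axis: since $F(x,y)$ does not depend on $z$, the set $\Omega_1$ is a generalized cylinder whose cross-section in every horizontal plane $\{z=z_0\}$ is the same convex planar set with smooth boundary $\{F(x,y)=0\}$. I would therefore cover $\Omega_0$ by the family of horizontal plane cuts $P_{z_0} = \Omega_0 \cap \{z=z_0\}$ and handle separately the cuts that miss the obstacle and those that hit it. Since $F$ is independent of $z$, the subset of cuts that intersect $\Omega_1$ covers all of $\partial{\Omega_1}$, so the reconstruction of $f$ inside every such cut recovers $f$ on all of $\Omega_0 \setminus \Omega_1$ as $z_0$ varies.

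The central geometric step is to verify that every broken or unbroken ray of the three-dimensional problem whose endpoints lie in $\partial{\Omega_0}\cap P_{z_0}$ stays inside $P_{z_0}$. For unbroken rays this is immediate since a straight segment with both endpoints at height $z_0$ lies in the plane $\{z=z_0\}$. For broken rays I would invoke the reflection law at the obstacle: at any point $(x_0,y_0,z_0)\in\partial{\Omega_1}$ the outward unit normal is $(F_x,F_y,0)/|\nabla F|$, which has zero $z$-component by hypothesis. Hence if the incoming direction $v=(v_1,v_2,0)$ lies in $\{z=z_0\}$, the reflected direction $v - 2(v\cdot\hat n)\hat n$ also has zero $z$-component, and the reflected segment continues in $P_{z_0}$. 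Thus within each cut the set of admissible rays coincides with the planar admissible ray set for $P_{z_0}$ with obstacle $P_{z_0}\cap\Omega_1$.

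On each cut that intersects $\Omega_1$ the cross-section of the obstacle is the fixed planar convex set $\{F(x,y)\le 0\}$ with smooth boundary, satisfying the hypotheses of the two-dimensional broken ray tomography theory from Section~\ref{algorithms} and from \cite{E1}. The measured three-dimensional travel times along rays that lie in $P_{z_0}$ are by construction line integrals of the restriction $f|_{P_{z_0}}$ along the corresponding planar broken or unbroken rays, so they furnish exactly the data of a well-posed planar broken ray tomography problem to which the numerical method of the previous sections applies and from which $f$ is recovered in $P_{z_0}\setminus\Omega_1$. On cuts that do not intersect $\Omega_1$, $P_{z_0}$ is obstacle-free and the classical Radon-transform inversion recovers $f$ in $P_{z_0}$.

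The hardest part is conceptual rather than computational: one must justify that the three-dimensional measurement set can in fact be \emph{harvested} to supply, for each cut, exactly the planar data needed by the two-dimensional theory, and that this harvesting is geometrically consistent — i.e., a planar broken ray in $P_{z_0}$ really corresponds to a physical three-dimensional broken ray of the underlying problem and not merely to a formal projection. The reflection-invariance argument above is precisely what makes this identification legitimate, and it is where the hypothesis that $F$ is independent of $z$ is indispensable; any tilt of the normal out of the horizontal plane would couple the cuts and destroy the decomposition, which is also why the discussion preceding the proposition singles out generalized cylinders as the natural class of admissible obstacles.
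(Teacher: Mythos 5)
Your proposal is correct and follows essentially the same route as the paper: cover $\Omega_0$ by horizontal cuts, observe that the obstacle normal $(\frac{\partial F}{\partial x},\frac{\partial F}{\partial y},0)$ has no $z$-component so reflected rays stay in the cutting plane, and reduce to the well-posed two-dimensional broken ray tomography problem on cuts meeting $\Omega_1$ (classical Radon inversion on cuts that miss it). Your explicit reflection computation $v-2(v\cdot\hat n)\hat n$ merely makes concrete the paper's statement that the plane contains both the incident ray and the surface normal.
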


\begin{proof}
Let $\Omega_0 \subset \bigcup \Pi_{z}$  where $\Pi_z$ are planes parallel to the xy plane. Reconstruct the restriction of $f$ to 
$\Pi_{z}$ and therefore $f(x,y,z)$ in $\Pi_{z}$ by two-dimensional broken ray tomography in $\Pi_{z}$; when $\Pi_z$ does not intersect $\Omega_1$
f is determined by the Radon transform and can be reconstructed via the algorithms of classical tomography. Reconstruction by reduction to the two-dimensional broken ray tomography problem can be done because 
the surface normal at any point $(x_1,y_1,z)$ on the boundary $\partial{\Omega_1}$ is colinear with 
$$\nabla F(x_1,y_1)=(\frac{\partial{F}}{\partial{x}}(x_1,y_1), \frac{\partial{F}}{\partial{y}}(x_1,y_1), 0)$$ and does not have a z component 
and is always contained in $\Pi_z$. Therefore, all incident rays in $\Omega_0 \bigcap \Pi_{z}$ will be reflected in $\Pi_z$ because $\Pi_z$ contains 
the incident ray and surface normal $\nabla F(x_1,y_1)$ at the reflection point for that ray. Unbroken rays in $\Pi_{z}$ will remain in $\Pi_z$ by 
definition therefore $\Omega_0 \bigcap \Pi_{z}$ contains all broken and unbroken rays that start on $\partial{\Omega_0} \bigcap \Pi_z$ and that have 
their endpoints in $\partial{\Omega_0}$. $\Pi_z \bigcap \partial{\Omega_1}$ is smooth and $\Pi_z \bigcap \Omega_1$ convex because $\partial{\Omega_1}$ 
is smooth and $\Omega_1$ convex and similarly $\Pi_z \bigcap \Omega_0$ is compact. Therefore $f$ can be reconstructed by solving the well-posed two 
dimensional broken ray tomography problem in $\Pi_z \bigcap \Omega_0$.

The reconstructed restriction of $f$ is unique in the plane $\Pi_{z}$ and the function $f(x,y,z)$ is well-defined in $\Omega_0$ 
because the sets $\Pi_z$ are disjoint and each point $(x,y,z)$ is in exactly one such set. 
\end{proof}

There are other shapes that allow reconstruction with plane cuts through the obstacle. For example, obstacle surfaces that are defined implicitly by 
functions that are independent of x or y can be cut by planes that are orthogonal to the x and y axis respectively. It is an interesting problem what all
the shapes are for an obstacle so that the three dimensional broken ray tomography problem can be solved by reducing it to the two dimensional broken
ray tomography problem in planes intersecting the obstacle and the union of which contains the obstacle. 

One application of Proposition \ref{brt_3d} is three dimensional tomographic reconstruction of the speed of sound in a domain that contains a 
cylindrical pipe. Imagine slices that cut the domain and are perpendicular to the pipe. The thickness of each of these slices is small and they can 
be considered two dimensional. Moreover, the speed of sound in each of the slices can be considered close to a constant and this allows reconstruction by
the numerical method from the previous chapter. The reconstructed values are combined and give f in the three dimensional region around the pipe. 


\section*{Acknowledgments}

I would like to thank my wife for her support, comments and suggestions.
I would like to thank Professor Gregory Eskin for suggesting this problem and for his continuous guidance. 


\bibliographystyle{dcu}
\bibliography{BRT}


\end{document}